\title{A short proof of Handel and Mosher's alternative for subgroups of $\text{Out}(F_N)$}
\author{Camille Horbez}
\begin{document}
\maketitle
\newtheorem{de}{Definition} [section]
\newtheorem{theo}[de]{Theorem} 
\newtheorem{prop}[de]{Proposition}
\newtheorem{lemma}[de]{Lemma}
\newtheorem{cor}[de]{Corollary}
\newtheorem{propd}[de]{Proposition-Definition}

\theoremstyle{remark}
\newtheorem{rk}[de]{Remark}
\newtheorem{ex}[de]{Example}
\newtheorem{question}[de]{Question}

\normalsize

\addtolength\topmargin{-.5in}
\addtolength\textheight{1.in}
\addtolength\oddsidemargin{-.045\textwidth}
\addtolength\textwidth{.09\textwidth}

\begin{abstract}
We give a short proof of a theorem of Handel and Mosher \cite{HM09} stating that any finitely generated subgroup of $\text{Out}(F_N)$ either contains a fully irreducible automorphism, or virtually fixes the conjugacy class of a proper free factor of $F_N$, and we extend their result to non finitely generated subgroups of $\text{Out}(F_N)$.
\end{abstract}

\section*{Introduction}

Let $N\ge 2$, and let $F_N$ denote a finitely generated free group of rank $N$. A \emph{free factor} of $F_N$ is a subgroup $A$ of $F_N$ such that $F_N$ splits as a free product of the form $F_N=A\ast B$, for some subgroup $B\subseteq F_N$. An automorphism $\Phi\in\text{Out}(F_N)$ is \emph{fully irreducible} if no power of $\Phi$ preserves the conjugacy class of any proper free factor of $F_N$. The goal of this paper is to give a short proof of the following classification theorem for subgroups of $\text{Out}(F_N)$, which was shown by Handel and Mosher in the case of finitely generated subgroups of $F_N$ in \cite{HM09}.

\begin{theo}\label{HM}
Every (possibly non finitely generated) subgroup of $\text{Out}(F_N)$ either 
\begin{itemize}
\item contains two fully irreducible elements that generate a rank two free subgroup, or
\item is virtually cyclic, generated by a fully irreducible automorphism, or
\item virtually fixes the conjugacy class of a proper free factor of $F_N$.
\end{itemize}
\end{theo}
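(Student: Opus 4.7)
The plan is to study the isometric action of $H$ on the free factor graph $\mathcal{FF}_N$. Three structural facts will drive the approach: $\mathcal{FF}_N$ is Gromov hyperbolic (Bestvina--Feighn); its loxodromic isometries are precisely the fully irreducible automorphisms of $F_N$ (Bestvina--Feighn); and vertex stabilisers in $\mathcal{FF}_N$ are the stabilisers of conjugacy classes of proper free factors. The backbone will be the classical classification of isometric group actions on a Gromov hyperbolic space (Gromov; Caprace--Cornulier--Monod--Tessera) into one of five types: elliptic, lineal, parabolic, focal (quasi-parabolic), or of general type.

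\paragraph{Elementary cases.} In the general type case, I would play ping-pong on $\partial\mathcal{FF}_N$ between two independent loxodromic axes to produce, after passing to high powers, two loxodromics generating a free subgroup of rank two; by the characterisation above these elements are fully irreducible, yielding the first conclusion. In the lineal case, $H$ virtually preserves a pair of points at infinity and contains a loxodromic, so $H$ is virtually cyclic with a fully irreducible generator, yielding the second conclusion. In the elliptic case, I would combine the boundedness of orbits with the fact that $\text{Out}(F_N)$ acts with finitely many orbits on the vertex set of $\mathcal{FF}_N$ to force a finite-index subgroup of $H$ to fix a vertex, yielding the third conclusion.

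\paragraph{The main obstacle.} The hard part will be the parabolic and focal cases, in which $H$ has unbounded orbits and fixes a unique point $\xi \in \partial\mathcal{FF}_N$. To handle them, I would invoke the identification of $\partial\mathcal{FF}_N$ with a space of arational $F_N$-trees (Bestvina--Reynolds; Hamenst\"adt): the point $\xi$ then corresponds to an $H$-invariant projective class $[T]$ of arational trees in the boundary of Outer space. Using the rigidity of stabilisers of arational trees (the $\text{Out}(F_N)$-stabiliser of an arational projective class is virtually cyclic, generated by a fully irreducible), the focal case should reduce to the lineal conclusion. The genuinely parabolic case (no loxodromic in $H$, yet a fixed boundary point) I would rule out by combining the North--South dynamics of fully irreducibles on the closure of Outer space (Levitt--Lustig) with a ping-pong construction producing a loxodromic element inside $H$, yielding a contradiction. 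This step --- ruling out parabolic actions and analysing stabilisers of arational classes at infinity --- is where I expect the bulk of the work to lie, since it is where the coarse geometry of $\mathcal{FF}_N$ must be coupled with the finer boundary theory of $\overline{CV_N}$.
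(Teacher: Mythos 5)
Your overall skeleton (the action on $\mathcal{FF}_N$, Gromov's classification of isometric actions on hyperbolic spaces, the identification of $\partial\mathcal{FF}_N$ with classes of arational trees, and the rigidity of their stabilizers) matches the architecture of the paper, and your treatment of the general type, lineal and focal cases can be made to work essentially as in the paper: a fixed point at infinity gives an $H$-invariant finite-dimensional simplex of arational trees in $\partial CV_N$, whose stabilizer is analyzed via Kapovich--Lustig when the tree is free and via realization on a surface plus McCarthy--Papadopoulos otherwise. But there is a genuine gap at the point you dismiss most quickly: the bounded (elliptic) case. In a non-proper hyperbolic space such as $\mathcal{FF}_N$, a subgroup with bounded orbits need not have any finite orbit of vertices, and the fact that $\text{Out}(F_N)$ acts with finitely many orbits on vertices is irrelevant: cocompactness of the \emph{ambient} action gives no control on the orbits of the subgroup $H$. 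Showing that a subgroup with no finite orbit in $\mathcal{FF}_N\cup\partial\mathcal{FF}_N$ cannot act with bounded orbits is precisely the hard content of the theorem, and it is what the paper's central technical device is for: one takes a $\mu$-stationary measure on the compact space $\overline{CV_N}$ for a measure $\mu$ whose support generates $H$, and uses a maximum-principle lemma together with an equivariant measurable map $\Theta$ assigning to each non-arational tree a nonempty finite collection of conjugacy classes of proper free factors (this requires the dichotomy of Proposition \ref{classification} for trees in $\partial CV_N\smallsetminus\mathcal{AT}$) to conclude that the stationary measure is concentrated on $\mathcal{AT}$; hence the $H$-orbit accumulates on arational trees and, via Theorem \ref{boundary-ff}, on $\partial\mathcal{FF}_N$, contradicting boundedness. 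Nothing in your proposal substitutes for this step.

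A second, lesser problem: to rule out the horocyclic (parabolic) case you propose to produce a loxodromic element of $H$ by ping-pong using the North--South dynamics of fully irreducibles; but in the horocyclic case $H$ contains no loxodromic, i.e.\ no fully irreducible element, so there is nothing to play ping-pong with --- the argument is circular. The correct route, which you already sketch for the focal case, applies verbatim here: the unique fixed point in $\partial\mathcal{FF}_N$ yields an $H$-invariant arational class, and the stabilizer analysis shows directly that such an $H$ is virtually cyclic generated by a fully irreducible or else virtually fixes the conjugacy class of a proper free factor, with no need to exhibit a loxodromic element first.
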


Our proof of Theorem \ref{HM} involves studying the action of subgroups of $\text{Out}(F_N)$ on the free factor complex, whose hyperbolicity was proved by Bestvina and Feighn in \cite{BF12} (see also \cite{KR12} for an alternative proof) and whose Gromov boundary was described by Bestvina and Reynolds \cite{BR13} and Hamenstädt \cite{Ham12}. We also use elementary tools that originally arose in the study of random walks on groups, by studying stationary measures on the boundaries of outer space and of the free factor complex.

Theorem \ref{HM} has already found various applications, for example to the study of morphisms from lattices to $\text{Out}(F_N)$ \cite{BW11} or to spectral rigidity questions \cite{CFKM12}.   

Handel and Mosher have generalized Theorem \ref{HM} in a recent series of papers \cite{HM13-1,HM13-2,HM13-3,HM13-4,HM13-5} to give a complete classification of finitely generated subgroups of $\text{Out}(F_N)$, analogous to Ivanov's classification of subgroups of the mapping class group of a finite type oriented surface \cite{Iva92}.

\section*{Acknowledgments}

I warmly thank my advisor Vincent Guirardel for his numerous and helpful advice that led to significant improvements in the exposition of the proof.

\section{Review}

\subsection{Gromov hyperbolic spaces}

A geodesic metric space $(X,d)$ is \emph{Gromov hyperbolic} if there exists $\delta>0$ such that for all $x,y,z\in X$, and all geodesic segments $[x,y],[y,z]$ and $[x,z]$, we have $N_{\delta}([x,z])\subseteq N_{\delta}([x,y])\cup N_{\delta}([y,z])$ (where given a subset $Y\subseteq X$ and $r\in\mathbb{R}_+$, we denote by $N_r(Y)$ the $r$-neighborhood of $Y$ in $X$). The \emph{Gromov boundary} $\partial X$ of $X$ is the space of equivalence classes of quasi-geodesic rays in $X$, two rays being equivalent if their images lie at bounded Hausdorff distance. 

\paragraph*{Isometry groups of Gromov hyperbolic spaces.}

Let $X$ be a hyperbolic geodesic metric space. An isometry $\phi$ of $X$ is \emph{loxodromic} if for all $x\in X$, we have

\begin{displaymath}
\lim_{n\to +\infty}\frac{1}{n}d(x,\phi^nx)>0.
\end{displaymath} 

\noindent Given a group $G$ acting by isometries on $X$, we denote by $\partial_XG$ the \emph{limit set} of $G$ in $\partial X$, which is defined as the intersection of $\partial X$ with the closure of the orbit of any point in $X$ under the $G$-action. The following theorem, due to Gromov, gives a classification of isometry groups of (possibly nonproper) Gromov hyperbolic spaces. The interested reader will find a sketch of proof in \cite[Proposition 3.1]{CCMT13}.

\begin{theo} (Gromov \cite[Section 8.2]{Gro87})\label{Gromov-1}
Let $X$ be a hyperbolic geodesic metric space, and let $G$ be a group acting by isometries on $X$. Then $G$ is either 
\begin{itemize}
\item \emph{bounded}, i.e. all $G$-orbits in $X$ are bounded; in this case $\partial_X G=\emptyset$, or
\item \emph{horocyclic}, i.e. $G$ is not bounded and contains no loxodromic element; in this case $\partial_X G$ is reduced to one point, or
\item \emph{lineal}, i.e. $G$ contains a loxodromic element, and any two loxodromic elements have the same fixed points in $\partial X$; in this case $\partial_X G$ consists of these two points, or
\item \emph{focal}, i.e. $G$ is not lineal, contains a loxodromic element, and any two loxodromic elements have a common fixed point in $\partial X$; in this case $\partial_X G$ is uncountable and $G$ has a fixed point in $\partial_X G$, or
\item \emph{of general type}, i.e. $G$ contains two loxodromic elements with no common endpoints; in this case $\partial_X G$ is uncountable and $G$ has no finite orbit in $\partial X$. In addition, the group $G$ contains two loxodromic isometries that generate a rank two free subgroup. 
\end{itemize} 
\end{theo}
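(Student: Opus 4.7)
The plan is to prove the theorem by a nested case analysis on the dynamics of $G$ acting on $X\cup\partial X$. Throughout I would use the standard north--south dynamics of loxodromic isometries: any loxodromic $\phi$ has exactly two fixed points $\phi^\pm\in\partial X$, and for every open neighborhood $U^+$ of $\phi^+$ and every compact set $K\subseteq(X\cup\partial X)\setminus\{\phi^-\}$, one has $\phi^nK\subseteq U^+$ for all sufficiently large $n$. These facts, which hold in any Gromov hyperbolic (possibly nonproper) space for the sequential topology on $X\cup\partial X$, are the only geometric input beyond stability of quasigeodesics.

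The first split is on whether all $G$-orbits are bounded. If yes, then $\partial_XG=\emptyset$ by definition of the limit set, and we are in the bounded case. Otherwise, fixing a basepoint $x_0\in X$ and a sequence $g_n\in G$ with $d(x_0,g_nx_0)\to\infty$, any subsequential limit of $g_nx_0$ in $\partial X$ lies in $\partial_XG$, which is therefore nonempty. The next split is whether $G$ contains a loxodromic. If not, $G$ is horocyclic by definition, and I would argue that $\partial_XG$ is reduced to a single point as follows: assuming for contradiction that it contains two distinct points $\xi,\eta$, pick orbit elements that push $x_0$ close to both endpoints of a bi-infinite quasigeodesic joining $\xi$ to $\eta$; a stability of quasigeodesics argument then produces an element of $G$ with positive asymptotic translation length, hence loxodromic, a contradiction.

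Suppose now that $G$ contains a loxodromic $g$ with fixed points $\{g^\pm\}\subseteq\partial_XG$. The remaining dichotomy is organized by the fixed point sets $\{h^\pm\}\subseteq\partial X$ of the various loxodromics $h\in G$. If all such pairs equal $\{g^\pm\}$, then $G$ is lineal and $\partial_XG=\{g^\pm\}$. If some two loxodromics have disjoint fixed point sets, we are in the general type case. Otherwise every two loxodromics share at least one boundary fixed point; a short combinatorial argument, using that conjugation by elements of $G$ permutes fixed point pairs and that a loxodromic cannot share exactly one fixed point with each of three pairwise compatible loxodromics without forcing a contradiction, shows that in fact all loxodromics share one common fixed point in $\partial X$, which is then the unique $G$-fixed point in $\partial_XG$, giving the focal case.

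The main obstacle is the general type case, where the rank two free subgroup has to be produced. Given loxodromics $a,b\in G$ with $\{a^\pm\}\cap\{b^\pm\}=\emptyset$, I would choose small pairwise disjoint neighborhoods $U_a^\pm,U_b^\pm$ in $X\cup\partial X$ of the four fixed points, and using north--south dynamics pick $N$ so large that $a^{\pm N}$ maps $(X\cup\partial X)\setminus U_a^\mp$ into $U_a^\pm$ and similarly for $b^{\pm N}$. The Tits ping-pong lemma then produces a rank two free subgroup $\langle a^N,b^N\rangle$ whose nontrivial elements all act loxodromically. The uncountability of $\partial_XG$ in both the focal and general type cases follows from the same sort of ping-pong estimates, which exhibit continuum many distinct attracting fixed points as limit points of products in the free semigroup; and the absence of a finite $G$-orbit in $\partial X$ in the general type case follows because no finite subset of $\partial X$ can be invariant under two loxodromics with disjoint fixed point sets.
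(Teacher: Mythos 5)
The paper does not actually prove this statement: it is quoted from Gromov, and the reader is pointed to \cite[Proposition 3.1]{CCMT13} for a sketch, so there is no in-paper proof to compare against. Your outline follows the standard route taken there — split on boundedness, then on the existence of loxodromic elements, then on the combinatorics of their fixed-point pairs, with ping-pong producing the rank two free subgroup in the general type case. The focal-case combinatorics (conjugating and using that a loxodromic fixes exactly two boundary points) and the final observations about finite orbits are fine at sketch level.

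There are, however, two places where your argument silently uses properness of $X$, and the entire point of stating the theorem for possibly non-proper spaces (and of its use in this paper, on the non-proper complex $\mathcal{FF}_N$) is that this cannot be assumed. First, in the unbounded case you assert that a sequence $g_nx_0$ with $d(x_0,g_nx_0)\to\infty$ has a subsequential limit in $\partial X$. In a non-proper hyperbolic space $X\cup\partial X$ is not sequentially compact, and an unbounded sequence need not subconverge to any boundary point (its pairwise Gromov products can remain bounded). That an unbounded \emph{group orbit} nevertheless has nonempty limit set is true, but it is a genuine lemma that uses the group structure — typically via the criterion that $g$ is loxodromic as soon as $d(x,gx)$ exceeds $2(gx,g^{-1}x)_x$ by a definite multiple of $\delta$ — and it is exactly the step your sketch skips; your horocyclic-case argument (two limit points force a loxodromic) has the same flavor of gap, since from orbit points near $\xi$ and near $\eta$ one must still build a single element $f$ with $(fx_0,f^{-1}x_0)_{x_0}$ controlled. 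Second, your formulation of north--south dynamics quantifies over \emph{compact} subsets $K\subseteq(X\cup\partial X)\setminus\{\phi^-\}$. In a non-proper space the sets you need to play ping-pong with (the $U_b^{\pm}$ and the complements of the $U_a^{\pm}$) are not compact, so this version is too weak to run the argument; you need the stronger statement that $\phi^n$ maps the complement of a Gromov-product neighborhood of $\phi^-$ into any given such neighborhood of $\phi^+$, which does hold in this generality and is what should be invoked. Neither issue is fatal — both are repaired in the references the paper cites — but as written the proof would fail precisely on the class of spaces the theorem is meant to cover.
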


In particular, we have the following result.

\begin{theo} (Gromov \cite[Section 8.2]{Gro87})\label{Gromov}
Let $X$ be a hyperbolic geodesic metric space, and let $G$ be a group acting by isometries on $X$. If $\partial_X G\neq\emptyset$, and $G$ has no finite orbit in $\partial X$, then $G$ contains a rank two free subgroup generated by two loxodromic isometries.
\end{theo}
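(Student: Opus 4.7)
The plan is to deduce Theorem \ref{Gromov} directly from the classification in Theorem \ref{Gromov-1} by ruling out four of the five cases. Under the two hypotheses $\partial_X G \neq \emptyset$ and ``$G$ has no finite orbit in $\partial X$,'' only the case where $G$ is of general type can occur, and that case already yields the desired rank two free subgroup generated by loxodromic isometries.

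More concretely, I would proceed as follows. First, the bounded case is excluded immediately, since by Theorem \ref{Gromov-1} it forces $\partial_X G = \emptyset$, contradicting the first hypothesis. Next, each of the horocyclic, lineal, and focal cases produces a finite $G$-invariant subset of $\partial X$, and hence a finite $G$-orbit, which is ruled out by the second hypothesis: in the horocyclic case, $\partial_X G$ consists of a single point, which must be $G$-fixed; in the lineal case, $\partial_X G$ consists of two points that are either both fixed or swapped by elements of $G$, giving a $G$-orbit of size at most two; in the focal case, the classification explicitly provides a global fixed point of $G$ in $\partial_X G$. With these four possibilities eliminated, $G$ must be of general type, and the last bullet of Theorem \ref{Gromov-1} then directly furnishes two loxodromic isometries generating a free subgroup of rank two.

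There is no real obstacle here; the statement is essentially a reformulation of the general type case of Theorem \ref{Gromov-1}. The only point requiring a short justification is why, in the lineal case, a set of two points in $\partial X$ preserved by $G$ gives rise to a \emph{finite} $G$-orbit in $\partial X$, which is immediate since a $G$-invariant set of cardinality two is a union of $G$-orbits of total cardinality two. Thus the entire argument reduces to a bookkeeping of cases, and no additional geometric input beyond Gromov's classification is needed.
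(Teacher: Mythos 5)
Your proposal is correct and matches the paper's treatment: the paper derives Theorem \ref{Gromov} from Theorem \ref{Gromov-1} as an immediate consequence (``In particular\dots''), exactly by the case elimination you describe, with the bounded case excluded by $\partial_X G\neq\emptyset$ and the horocyclic, lineal, and focal cases excluded because each yields a finite $G$-orbit in $\partial X$.
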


\subsection{Outer space}
Let $N\ge 2$. \emph{Outer space} $CV_N$ is defined to be the space of simplicial free, minimal, isometric actions of $F_N$ on simplicial metric trees, up to $F_N$-equivariant homotheties \cite{CV86} (an action of $F_N$ on a tree is \emph{minimal} if there is no proper invariant subtree). We denote by $cv_N$ the \emph{unprojectivized outer space}, in which trees are considered up to equivariant isometries, instead of homotheties. The group $\text{Out}(F_N)$ acts on $CV_N$ and on $cv_N$ on the right by precomposing the actions (one can also consider the $\text{Out}(F_N)$-action on the left by setting $\Phi(T,\rho)=(T,\rho\circ \phi^{-1})$ for all $\Phi\in\text{Out}(F_N)$, where $\rho:F_N\to\text{Isom}(T)$ denotes the action, and $\phi\in\text{Aut}(F_N)$ is any lift of $\Phi$ to $\text{Aut}(F_N)$). 

An \emph{$\mathbb{R}$-tree} is a metric space $(T,d_T)$ in which any two points $x$ and $y$ are joined by a unique arc, which is isometric to a segment of length $d_T(x,y)$. Let $T$ be an \emph{$F_N$-tree}, i.e. an $\mathbb{R}$-tree equipped with an isometric action of $F_N$. For $g\in F_N$, the \emph{translation length} of $g$ in $T$ is defined to be

\begin{displaymath}
||g||_T:=\inf_{x\in T}d_T(x,gx).
\end{displaymath}

\noindent Culler and Morgan have shown in \cite[Theorem 3.7]{CM87} that the map

\begin{displaymath}
\begin{array}{cccc}
i:&cv_N&\to &\mathbb{R}^{F_N}\\
&T&\mapsto & (||g||_T)_{g\in F_N}
\end{array}
\end{displaymath}

\noindent is an embedding, whose image projects to a subspace of $\mathbb{PR}^{F_N}$ with compact closure $\overline{CV_N}$ \cite[Theorem 4.5]{CM87}. Bestvina and Feighn \cite{BF94}, extending results by Cohen and Lustig \cite{CL95}, have characterized the points of this compactification as being the minimal $F_N$-trees with trivial or maximally cyclic arc stabilizers and trivial tripod stabilizers.

\subsection{The free factor complex}\label{sec-ff}

The \emph{free factor complex} $\mathcal{FF}_N$, introduced by Hatcher and Vogtmann in \cite{HV98}, is defined when $N\ge 3$ as the simplicial complex whose vertices are the conjugacy classes of nontrivial proper free factors of $F_N$, and higher dimensional simplices correspond to chains of inclusions of free factors. (When $N=2$, one has to modify this definition by adding an edge between any two complementary free factors to ensure that $\mathcal{FF}_2$ remains connected, and $\mathcal{FF}_2$ is isomorphic to the Farey graph). Gromov hyperbolicity of $\mathcal{FF}_N$ was proved by Bestvina and Feighn \cite{BF12} (see also \cite{KR12} for an alternative proof). There is a natural, coarsely well-defined map $\psi:CV_N\to\mathcal{FF}_N$, that maps any tree $T\in CV_N$ to one of the conjugacy classes of the cyclic free factors of $F_N$ generated by an element of $F_N$ whose axis in $T$ projects to an embedded simple loop in the quotient graph $T/F_N$. The Gromov boundary of $\mathcal{FF}_N$ was determined independently by Bestvina and Reynolds \cite{BR13} and by Hamenstädt \cite{Ham12}. A tree $T\in\partial CV_N$ is \emph{arational} if no proper free factor of $F_N$ acts with dense orbits on its minimal subtree in $T$ (in particular, no proper free factor of $F_N$ is elliptic in $T$). We denote by $\mathcal{AT}$ the subspace of $\partial CV_N$ consisting of arational trees. We define an equivalence relation $\sim$ on $\mathcal{AT}$ by setting $T\sim T'$ whenever $T$ and $T'$ have the same underlying topological tree. 

\begin{theo}\label{boundary-ff} (Bestvina-Reynolds \cite{BR13}, Hamenstädt \cite{Ham12})
There is a unique homeomorphism $\partial\psi:\mathcal{AT}/\sim\to\partial \mathcal{FF}_N$, so that for all $T\in\mathcal{AT}$ and all sequences $(T_n)_{n\in\mathbb{N}}\in CV_N^{\mathbb{N}}$ that converge to $T$, the sequence $(\psi(T_n))_{n\in\mathbb{N}}$ converges to $\partial\psi(T)$.
\end{theo}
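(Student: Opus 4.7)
The plan is to construct $\partial\psi$ by taking limits of $\psi$ along sequences in $CV_N$ approaching $\partial CV_N$. Given $T\in\mathcal{AT}$ and any sequence $T_n\to T$ in $CV_N$, I would define $\partial\psi([T]):=\lim_n\psi(T_n)$ inside $\overline{\mathcal{FF}_N}$. Uniqueness of the resulting map is then automatic from the stated continuity property, since $CV_N$ is dense in $\overline{CV_N}$, so the real content is: (a) the limit exists; (b) it lies in $\partial\mathcal{FF}_N$; (c) it depends only on the $\sim$-class of $T$; (d) distinct classes give distinct boundary points; (e) the map is surjective; (f) both it and its inverse are continuous.

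For (a) and (b), the key point is that arationality of $T$ forces $\psi(T_n)$ to leave every bounded set of $\mathcal{FF}_N$. Indeed, if some subsequence $\psi(T_{n_k})$ stayed in a bounded region, one could pass to a further subsequence and extract, from the free factors $[A_{n_k}]$ witnessing these values of $\psi$, a proper free factor of $F_N$ that in the limit would act on its minimal subtree in $T$ either elliptically or with non-dense orbits, contradicting $T\in\mathcal{AT}$. Once unboundedness is established, Gromov hyperbolicity of $\mathcal{FF}_N$ does the rest: if $(T_n)$ and $(T_n')$ both converge to $T$, I would interleave them; the interleaved $\psi$-sequence still exits every bounded set, and in a Gromov hyperbolic space such a sequence has a unique limit point in the boundary, giving the well-definedness of $\partial\psi(T)\in\partial\mathcal{FF}_N$.

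For (c) and (d), I would use the finer structure of arational trees, in particular their dual laminations: two arational trees with the same underlying topological tree carry the same dual lamination, and this lamination controls which free factors can have $\psi$-projection close to the approximating points, so $\sim$-equivalent trees yield the same boundary point; conversely, non-equivalent arational trees can be separated by free factors whose projections diverge in different directions of $\partial\mathcal{FF}_N$. For (e), given $\xi\in\partial\mathcal{FF}_N$ I would choose free factors $[A_n]\to\xi$, realise each $[A_n]$ as arising from an appropriate graph of groups with $T_n\in CV_N$ satisfying $\psi(T_n)$ close to $[A_n]$, extract a subsequential limit $T\in\overline{CV_N}$, and verify arationality from the very fact that $\psi(T_n)\to\xi$ prevents any proper free factor from being elliptic or acting non-densely on its minimal subtree in $T$. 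Continuity of $\partial\psi$ and of its inverse then follow formally from the constructions.

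The main obstacle is step (a)/(b): ruling out bounded subsequential behaviour of $\psi(T_n)$ requires genuine input from the theory of very small $F_N$-trees (dual laminations, index theory, the dichotomy between discrete and dense orbit types for proper free factor actions), and this is the technical heart of both the Bestvina--Reynolds and Hamenstädt papers. The other delicate point is matching the fibers of $\partial\psi$ exactly with the equivalence relation $\sim$ in (c)--(d); showing that equivalent trees produce the same boundary point is reasonably natural once one has the lamination picture, but separating non-equivalent trees requires constructing, for any two of them, sequences of free factors whose $\psi$-projections are forced apart, which again rests on the lamination and index machinery for arational trees.
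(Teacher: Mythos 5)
The paper does not prove this statement: it is quoted as background, attributed to Bestvina--Reynolds \cite{BR13} and Hamenst\"adt \cite{Ham12}, and used as a black box, so there is no internal argument to compare yours against. Judged on its own terms, your outline has the right global shape (extend $\psi$ radially along sequences in $CV_N$, identify the fibers of the extension with $\sim$-classes, check surjectivity), and you are honest that the heavy lifting sits in the theory of very small trees. But one step you lean on is actually false as stated, and it is the step that carries the whole of (a)--(b).

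In a Gromov hyperbolic space, a sequence that eventually leaves every bounded set need \emph{not} converge to a point of the Gromov boundary: convergence at infinity means the Gromov products $(x_n\cdot x_m)_{x_0}$ tend to infinity, and a sequence can escape to infinity while oscillating between two (or more) distinct directions, hence have several boundary accumulation points or none. So proving that $(\psi(T_n))$ is unbounded does not define $\partial\psi(T)$, and interleaving two approximating sequences does not show they share a limit. This is precisely where the substance of \cite{BR13} and \cite{Ham12} lies: one shows that folding (or geodesic) paths in $CV_N$ converging to an arational tree project to unparametrized quasi-geodesics in $\mathcal{FF}_N$ and controls the Gromov products of the projections directly; the impossibility of bounded subsequences is a by-product of that analysis, not a substitute for it. Similarly, your items (c)--(e) are restatements of what must be proved rather than arguments: asserting that the dual lamination ``controls'' which free factors project nearby, or that arationality of a subsequential limit ``follows from'' $\psi(T_n)\to\xi$, is exactly the content of the theorem (and surjectivity in particular needs the simplex-of-length-measures structure on arational trees with a fixed topological model, which your sketch does not engage). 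As a roadmap your proposal is reasonable; as a proof it is missing its central estimate.
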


Recall from the introduction that an automorphism $\Phi\in\text{Out}(F_N)$ is \emph{fully irreducible} if no nonzero power of $\Phi$ preserves the conjugacy class of any proper free factor of $F_N$. Bestvina and Feighn have characterized elements of $\text{Out}(F_N)$ which act as loxodromic isometries of $\mathcal{FF}_N$.

\begin{theo}(Bestvina-Feighn \cite[Theorem 9.3]{BF12}) \label{loxo-ff}
An outer automorphism $\Phi\in\text{Out}(F_N)$ acts loxodromically on $\mathcal{FF}_N$ if and only if it is fully irreducible.
\end{theo}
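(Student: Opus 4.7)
The plan is to apply Gromov's classification (Theorem~\ref{Gromov-1}) to the cyclic group $\langle\Phi\rangle$ acting on the hyperbolic space $\mathcal{FF}_N$. For a cyclic group only three cases can occur---bounded, horocyclic, or lineal---so ruling out the first two will force $\langle\Phi\rangle$ to be lineal, hence to contain a loxodromic element; and for a cyclic group generated by $\Phi$, containing a loxodromic power forces $\Phi$ itself to be loxodromic, since translation length is multiplicative under powers.

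\emph{The easy direction.} Suppose $\Phi$ is not fully irreducible, so that some nonzero power $\Phi^k$ preserves the conjugacy class $[A]$ of a proper free factor. Then $\Phi^k$ fixes the vertex of $\mathcal{FF}_N$ corresponding to $[A]$, and the $\langle\Phi\rangle$-orbit of that vertex has at most $k$ elements and is therefore bounded. This forbids $\Phi$ from acting as a loxodromic isometry.

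\emph{The main direction.} Now assume $\Phi$ is fully irreducible. I would first invoke the north-south dynamics of $\Phi$ on $\overline{CV_N}$ established by Levitt--Lustig, building on Bestvina--Handel's train-track theory: there exist two distinct fixed trees $T_\pm^\Phi\in\partial CV_N$ and a stretch factor $\lambda>1$, such that $\Phi^{\pm n}T\to T_\pm^\Phi$ in $\overline{CV_N}$ for every $T\in CV_N$. Both trees are arational, since a proper free factor acting with dense orbits on its minimal subtree would, by pigeonholing on $\Phi$-iterates, yield a periodic proper free factor, contradicting full irreducibility. Applying the boundary map of Theorem~\ref{boundary-ff} together with the $\Phi$-equivariance of $\psi$, one then deduces $\Phi^{\pm n}\cdot\psi(T)\to\partial\psi(T_\pm^\Phi)$ in $\partial\mathcal{FF}_N$. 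This makes the $\langle\Phi\rangle$-orbit in $\mathcal{FF}_N$ unbounded, ruling out the bounded case.

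\emph{The main obstacle.} What remains is to rule out the horocyclic case, that is, to show $\partial\psi(T_+^\Phi)\neq\partial\psi(T_-^\Phi)$ in $\partial\mathcal{FF}_N$, equivalently $T_+^\Phi\not\sim T_-^\Phi$ in $\mathcal{AT}/\!\sim$. I expect this to be the hardest step, because $\sim$ is strictly coarser than projective equality in $\overline{CV_N}$: it identifies arational trees sharing an underlying topological tree, even when their invariant metrics differ. The natural strategy is to exploit the stretch factors: $\Phi$ scales translation lengths by $\lambda>1$ on $T_+^\Phi$ and by $\lambda^{-1}$ on $T_-^\Phi$, so if the underlying topological trees coincided, an equivariance argument comparing invariant length functions on the common topological tree would force $\lambda=\lambda^{-1}$, a contradiction. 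Alternatively, one could rely on the fact that the attracting and repelling laminations of a fully irreducible are distinct, together with a result identifying $\sim$-equivalence with coincidence of dual laminations. Once distinctness is granted, $\langle\Phi\rangle$ falls into the lineal case of Theorem~\ref{Gromov-1}, contains a loxodromic power $\Phi^k$, and hence $\Phi$ itself is loxodromic.
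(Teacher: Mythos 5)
The paper itself offers no proof of this statement: it is quoted directly from Bestvina--Feighn \cite[Theorem 9.3]{BF12}, whose actual argument runs through the projection of folding paths and train-track axes to $\mathcal{FF}_N$ and constitutes the main technical work of that paper. So there is no internal proof to compare against, and I can only judge your argument on its own terms. Your easy direction is correct, and the overall strategy for the hard direction --- Levitt--Lustig north--south dynamics on $\overline{CV_N}$, arationality of $T_\pm^\Phi$ via the finiteness and equivariance of the canonical reducing data (essentially the $\mathrm{Dyn}\cup\mathrm{Per}$ mechanism of Proposition \ref{classification}), pushing forward through Theorem \ref{boundary-ff}, and then applying Theorem \ref{Gromov-1} to the cyclic group --- is a coherent modern route to the result.

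However, the step you flag as the obstacle is a genuine gap, and your first proposed fix does not work as stated. If $T_+^\Phi\sim T_-^\Phi$, all you know is that the two trees are invariant length functions carried by the same underlying topological tree; the cone of invariant length measures on a fixed topological arational tree can have projective dimension greater than zero (non-uniquely ergometric arational trees exist), so a priori $\Phi$ could scale two distinct projective classes of metrics on the same topological tree by unrelated factors, and no contradiction between $\lambda>1$ and $\lambda^{-1}<1$ follows from equivariance alone. Closing this requires substantive external input: either unique ergometricity of $T_+^\Phi$, or the identification of $\sim$-classes of arational trees with their dual laminations together with the fact that $L(T_+^\Phi)\neq L(T_-^\Phi)$ (one contains the repelling lamination of $\Phi$, the other the attracting one). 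Your second suggestion names exactly the right ingredient but supplies none of it, so as written the proof is incomplete precisely at its load-bearing step. A further caution: Theorem \ref{boundary-ff} postdates \cite{BF12} and is built on its hyperbolicity machinery, so if you intend this as a proof of the Bestvina--Feighn theorem rather than a reconstruction from later technology, you should verify that \cite{BR13} and \cite{Ham12} do not already invoke the loxodromic classification you are trying to establish.
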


\section{An alternative for trees in the boundary of outer space} \label{sec-arational}

Given $T\in\partial{CV_N}\smallsetminus\mathcal{AT}$, the set of conjugacy classes of minimal (with respect to inclusion) proper free factors of $F_N$ which act with dense orbits on their minimal subtree in $T$, but are not elliptic in $T$, is finite \cite[Corollary 7.4 and Proposition 9.2]{Rey12}, and depends $\text{Out}(F_N)$-equivariantly on $T$. We denote it by $\text{Dyn}(T)$.  The following proposition essentially follows from Reynolds' arguments in his proof of \cite[Theorem 1.1]{Rey12}, we provide a sketch for completeness.

\begin{prop} \label{classification}
For all $T\in\partial CV_N\smallsetminus\mathcal{AT}$, either $\text{Dyn}(T)\neq\emptyset$, or there is a nontrivial point stabilizer in $T$ which is contained in a proper free factor of $F_N$.
\end{prop}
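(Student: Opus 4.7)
The plan is to follow the scheme of Reynolds' proof of \cite[Theorem 1.1]{Rey12}, by decomposing $T$ as a graph of actions and iterating on the rank of proper free factors of $F_N$.

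Since $T\in\partial CV_N$ is very small (Bestvina--Feighn, Cohen--Lustig), the structure theorem for minimal actions on $\mathbb{R}$-trees gives a graph-of-actions decomposition of $T$ with simplicial $F_N$-skeleton $S$ (whose edge stabilizers are trivial or maximally cyclic) and vertex actions $G_v\curvearrowright T_v$ that are either trivial (a point) or indecomposable with dense orbits. The same decomposition applies to the minimal invariant subtree $T_H$ of any finitely generated subgroup $H\leq F_N$, yielding an analogous skeleton $S_H$.

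Assume $\text{Dyn}(T)=\emptyset$. Since $T\notin\mathcal{AT}$, some proper free factor $A$ fails to act well on $T$: it is either elliptic, or its action on $T_A$ has orbits that are not all dense. Choose such an $A$ of minimal rank. If $A$ is not elliptic and $A\curvearrowright T_A$ has dense orbits, then rank-minimality forces $[A]\in\text{Dyn}(T)$, contradicting the assumption. So either $A$ is elliptic, or the graph-of-actions decomposition of $A\curvearrowright T_A$ has a nontrivial skeleton $S_A$, giving a splitting of $A$ over trivial or maximally cyclic subgroups. In the latter case, the vertex groups of $S_A$ are subgroups of $A$ of strictly smaller rank; arguing carefully (using Grushko's theorem when edge stabilizers are trivial, and a more delicate compatibility argument for cyclic edge stabilizers), one extracts from them a proper free factor of $F_N$ of rank smaller than $A$ that is again non-arationally placed in $T$, contradicting the minimality of $A$. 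Hence $A$ must be elliptic in $T$.

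It remains to promote an elliptic proper free factor $A$ to a point stabilizer contained in a proper free factor. Fix a point $x\in T$ fixed by $A$: the point stabilizer $G_x$ contains $A$ but may strictly contain it, so $G_x$ need not lie inside $A$ itself. To locate $G_x$ (or a suitable adjacent stabilizer) inside a proper free factor, I would look at the skeleton $S$ of $T$: the point $x$ projects to a vertex $v$ of $S$, with $G_x\subseteq G_v$. The trivial tripod stabilizer condition and the cyclic/trivial arc stabilizer constraint force a strong free-factor structure on the vertex groups of $S$, which, combined with the finite-rank bound for point stabilizers in very small trees (Gaboriau--Levitt), produces the desired point stabilizer as a vertex group of a free splitting of $F_N$.

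The main obstacle is this last step. Passing from ``an elliptic proper free factor'' to ``a point stabilizer contained in a proper free factor'' is not formal, since point stabilizers can be strictly larger than any elliptic free factor they contain. It is precisely here that the technical core of Reynolds' argument \cite{Rey12} is needed, combining the bounded complexity of point stabilizers in very small trees with the graph-of-actions decomposition to realize a point stabilizer as a free factor of $F_N$.
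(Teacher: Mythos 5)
Your reduction to ``some proper free factor is elliptic'' is essentially fine, though it is both roundabout and slightly garbled: with the definitions used here, $T\notin\mathcal{AT}$ means some proper free factor \emph{does} act with dense orbits on its minimal subtree (ellipticity being the degenerate case), not that its orbits fail to be dense, and once $\text{Dyn}(T)=\emptyset$ excludes the non-elliptic possibility you get an elliptic proper free factor immediately, with no need for the rank-minimality descent (whose key step --- that the vertex groups of $S_A$ yield a smaller proper free factor of $F_N$ that is again badly placed --- you only assert ``arguing carefully''). The genuine gap is the last step, which you correctly identify as the crux and then do not prove: passing from an elliptic proper free factor to a \emph{point stabilizer} of $T$ contained in a proper free factor. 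Pointing at ``the technical core of Reynolds' argument'' and gesturing at tripod/arc-stabilizer conditions plus the Gaboriau--Levitt rank bound is not an argument, and nothing in your sketch actually produces the required point stabilizer.

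For comparison, the paper's proof is organized around Lemma \ref{stabilizers}: in a simplicial $F_N$-tree with (at most) cyclic edge stabilizers, every edge stabilizer is contained in a proper free factor, and at most one conjugacy class of vertex stabilizers can fail to be contained in one. It then splits into two cases. If $T$ contains an arc with nontrivial stabilizer, collapsing the vertex trees of Levitt's decomposition gives a simplicial tree with the same edge stabilizers, and the lemma concludes. If not, it follows the argument of \cite[Proposition 10.3]{Rey12}: were no point stabilizer contained in a proper free factor, $T$ would be geometric with dense orbits and all minimal components surfaces; one then applies Lemma \ref{stabilizers} to the skeleton of the decomposition into minimal components (its edge stabilizers are cyclic since surface components have cyclic point stabilizers), the ``dense orbits'' alternative in the lemma's conclusion being ruled out by $\text{Dyn}(T)=\emptyset$, and the remaining degenerate case of a trivial skeleton forces $T$ to be dual to a surface with at least two boundary components, any boundary curve giving the desired point stabilizer. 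This case analysis and the use of Lemma \ref{stabilizers} are exactly the content missing from your proposal, so as it stands it does not establish the proposition.
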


In the proof of Proposition \ref{classification}, we will make use of the following well-known fact.

\begin{lemma} (see \cite[Corollary 11.2]{Rey12})\label{stabilizers}
Let $T$ be a simplicial $F_N$-tree, all of whose edge stabilizers are (at most) cyclic. Then every edge stabilizer in $T$ is contained in a proper free factor of $F_N$, and there is at most one conjugacy class of vertex stabilizers in $T$ that is not contained in any proper free factor of $F_N$. 
\end{lemma}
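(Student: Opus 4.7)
The plan is to reduce both claims to a classical theorem on one-edge cyclic splittings of free groups, due to Shenitzer (amalgam case) and Swarup (HNN case): if $F_N$ splits as $A \ast_C B$ or $A \ast_C$ with $C$ cyclic, then $C$ is a free factor of at least one adjacent vertex group. The rest of the argument consists of carefully collapsing edge orbits of $T$ to put ourselves in a position to apply this input.

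For the first assertion, given an edge $e$ of $T$ with stabilizer $C$, I would collapse in $T$ every $F_N$-orbit of edges other than $F_N \cdot e$. The resulting $F_N$-tree corresponds to a one-edge splitting $F_N = A \ast_C B$ or $F_N = A \ast_C$, and by Shenitzer/Swarup one of the adjacent vertex groups decomposes as $A = C \ast A'$. In the amalgam case, substitution yields $F_N = A' \ast B$, exhibiting $B$ as a proper free factor of $F_N$ containing $C$; the HNN case is handled analogously, using Swarup on both associated cyclic subgroups together with a Nielsen-type substitution to rewrite the presentation as a free product with one factor containing a conjugate of $C$.

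For the second assertion, I would proceed by induction on the number of $F_N$-orbits of edges of $T$. The base case (no edges) is trivial, since the only vertex stabilizer is $F_N$ itself. For the inductive step, collapse a single edge orbit $F_N \cdot e$ to obtain a tree $T'$ with strictly fewer edge orbits, to which the inductive hypothesis applies. Given two vertices $u, w$ of $T$ in distinct $F_N$-orbits with neither $G_u$ nor $G_w$ contained in any proper free factor, their enlarged stabilizers in $T'$ are also not contained in any proper free factor (since a subgroup of a free-factor subgroup is itself contained in that free factor). If the images of $u$ and $w$ in $T'$ lie in different $F_N$-orbits, the inductive hypothesis is directly violated. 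Otherwise, the collapse has merged their orbits, and one analyses the merged vertex stabilizer $\hat G$ via the Shenitzer/Swarup decomposition of the first assertion: writing $\hat G$ as a free product of proper subgroups of $F_N$ allows one to exhibit a proper free factor of $F_N$ containing one of $G_u$ or $G_w$, once again a contradiction.

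The main obstacle is this last case, where the two non-free-factor orbits collapse to a single orbit of $T'$: the analysis of the merged vertex stabilizer must track carefully how the Shenitzer/Swarup decomposition interacts with both $G_u$ and $G_w$, especially in the HNN subcase and in the degenerate possibility $A = C$ (so $A' = 1$), where one falls back on the first assertion to conclude that $C$ itself, and hence the offending stabilizer inside it, is contained in a proper free factor of $F_N$.
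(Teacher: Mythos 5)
The first thing to note is that the paper does not prove this lemma at all: it imports it wholesale from \cite[Corollary 11.2]{Rey12}. So your argument has to stand entirely on its own. The Shenitzer--Swarup circle of ideas is indeed the right toolbox, and your amalgam case is correct: if $F_N=A\ast_C B$ and $A=C\ast A'$, the amalgamation relation eliminates the generator of $C$ precisely because its two images lie in \emph{different} vertex groups, giving $F_N=A'\ast B$ with $C\le B$. But this is exactly why the HNN case is not ``handled analogously'': there both associated subgroups $C_1$ and $C_2=tC_1t^{-1}$ lie in the \emph{same} vertex group $A$. If all you know is, say, $A=\langle c_2\rangle\ast A''$, the relation $c_2=tc_1t^{-1}$ cannot be used to eliminate $c_2$, because $c_1$ is a word in $\langle c_2\rangle\ast A''$ which may itself involve $c_2$: the substitution is circular. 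What makes a Nielsen/Tietze move legitimate is a \emph{single} free decomposition $A=A_1\ast A_2$ that separates $C_1$ from $C_2$, with one of them a free factor of its side; then $F_N=A_1'\ast A_2\ast\langle t\rangle$ and the conclusion follows. Applying the free-factor statement ``to both associated subgroups'' produces two unrelated decompositions of $A$ and yields no such separation. That separation statement is the real content of the HNN case (and is the form of Swarup's theorem one should quote, or else quote directly that an edge group of a cyclic splitting of $F_N$ lies in a proper free factor); your sketch does not supply it.

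The more serious gap is in the inductive step for the second assertion, in the merged case. There $\hat G=G_u\ast_{G_e}G_w$, and you propose to apply Shenitzer inside the free group $\hat G$, claiming that writing $\hat G$ as a free product ``exhibits a proper free factor of $F_N$ containing one of $G_u$ or $G_w$''. That inference is valid only when $\hat G=F_N$, i.e.\ when $T$ had a single edge orbit. For a proper subgroup $\hat G<F_N$ it is simply false that a free factor of $\hat G$ must lie in a proper free factor of $F_N$: for instance $\langle [a,b]\rangle$ is a free factor of $\langle [a,b],\,b\rangle\le F_2$, yet $[a,b]$ lies in no proper free factor of $F_2$. So Shenitzer applied inside $\hat G$ gives information about free factors of $\hat G$ only; no contradiction results, and the inductive hypothesis for $T'$ is perfectly consistent with $\hat G$ being its unique bad vertex class. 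The induction therefore stalls exactly at the point you yourself flag as ``the main obstacle''. Two further repairs would be needed even where the scheme does work: (i) minimality of $T$ must be assumed --- it is used implicitly when you take the collapsed one-edge splittings to be nontrivial ($A\ne C$), and without it the lemma is false, as the Bass--Serre tree of the decomposition $F_2\ast_C C$ of $F_2$ with $C=\langle[a,b]\rangle$ shows (a simplicial tree with cyclic edge stabilizers lying in no proper free factor); (ii) in your ``different orbits'' case, the inductive hypothesis is contradicted only if the two enlarged stabilizers are non-conjugate, not merely in different orbits; this one is fixable via the first assertion, since a subgroup fixing vertices in two distinct orbits fixes an edge and hence lies in a cyclic, necessarily good, edge stabilizer.
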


\begin{proof}[Proof of Proposition \ref{classification}]
Let $T\in\partial CV_N\smallsetminus\mathcal{AT}$, and assume that $\text{Dyn}(T)=\emptyset$. First assume that $T$ contains an edge with nontrivial stabilizer. Let $S$ be the simplicial tree obtained by collapsing all vertex trees to points in the decomposition of $T$ as a graph of actions defined in \cite{Lev94}. Then edge stabilizers in $T$ are also edge stabilizers in $S$, and the conclusion follows from Lemma \ref{stabilizers}.

Otherwise, as in the proof of \cite[Proposition 10.3]{Rey12}, we get that if some point stabilizer in $T$ is not contained in any proper free factor of $F_N$, then $T$ is geometric, has dense orbits, and all its minimal components are surfaces (the reader is referred to \cite{BF95,GLP94} for background on geometric $F_N$-trees). Dual to the  decomposition of $T$ into its minimal components is a bipartite simplicial $F_N$-tree $S$ called the \emph{skeleton} of $T$, defined as follows \cite[Section 1.3]{Gui08}. Vertices of $S$ are of two kinds: some correspond to minimal components $Y$ of $T$, and the others correspond to points $x\in T$ belonging to the intersection of two distinct minimal components. There is an edge from the vertex associated to $x$ to the vertex associated to $Y$ whenever $x\in Y$. In particular, point stabilizers in $S$ are either point stabilizers in $T$, or groups acting with dense orbits on their minimal subtree in $T$. In a minimal surface component, all point stabilizers are cyclic, so $S$ is a simplicial $F_N$-tree with (at most) cyclic edge stabilizers. If $S$ is nontrivial, Lemma \ref{stabilizers} implies that either a point stabilizer in $T$ is contained in a proper free factor, or some subgroup of $F_N$ is contained in a proper free factor $F$ of $F_N$ and acts with dense orbits on its minimal subtree in $T$. In the latter case, by decomposing the $F$-action on the $F$-minimal subtree of $T$ as a graph of actions with trivial arc stabilizers \cite{Lev94}, we get that $\text{Dyn}(T)\neq\emptyset$, which has been excluded. If $S$ is reduced to a point, then $T$ is minimal and dual to a surface with at least two boundary curves (otherwise $T$ would be arational by \cite[Theorem 1.1]{Rey12}). Any of these curves yields the desired point stabilizer in $T$.
\end{proof}

\section{Nonelementary subgroups of $\text{Out}(F_N)$}

A subgroup $H\subseteq\text{Out}(F_N)$ is \emph{nonelementary} if it does not preserve any finite set of $\mathcal{FF}_N\cup\partial\mathcal{FF}_N$. In this section, we will prove Theorem \ref{HM} for nonelementary subgroups of $\text{Out}(F_N)$.

\begin{theo}\label{nonelementary}
Every nonelementary subgroup of $\text{Out}(F_N)$ contains a rank two free subgroup, generated by two fully irreducible automorphisms. 
\end{theo}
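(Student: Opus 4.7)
The plan is to apply Gromov's classification theorem (Theorem~\ref{Gromov-1}) to the $H$-action on the hyperbolic complex $\mathcal{FF}_N$, and show that nonelementarity forces $H$ to be of \emph{general type}. Once this is established, Theorem~\ref{Gromov-1} directly provides two loxodromic isometries of $\mathcal{FF}_N$ generating a rank two free subgroup, and by Theorem~\ref{loxo-ff} loxodromic isometries of $\mathcal{FF}_N$ are exactly the fully irreducible outer automorphisms, yielding the conclusion.

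Three of the four non-general-type classes are ruled out directly from the definition of nonelementarity. If $H$ is horocyclic, then $\partial_{\mathcal{FF}_N}H$ is a single point; if $H$ is lineal, it consists of exactly two points; if $H$ is focal, it contains a global $H$-fixed boundary point. Each of these is a finite $H$-invariant subset of $\partial\mathcal{FF}_N$, contradicting nonelementarity.

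The real work is excluding the bounded case, namely that every $H$-orbit in $\mathcal{FF}_N$ is bounded. Here I would exploit the compactness of $\overline{CV_N}$ together with the stationary measure machinery advertised in the introduction. Pick a probability measure $\mu$ on $H$ whose support generates $H$, and let $\nu$ be a $\mu$-stationary Borel probability measure on $\overline{CV_N}$. Split $\nu = \nu_1 + \nu_2$ according to the decomposition $\overline{CV_N} = \mathcal{AT} \sqcup (\overline{CV_N}\smallsetminus\mathcal{AT})$. For the arational part $\nu_1$: the homeomorphism $\partial\psi$ of Theorem~\ref{boundary-ff} pushes it forward to a $\mu$-stationary measure on $\partial\mathcal{FF}_N$, and a standard hyperbolicity-plus-boundedness argument should force this pushforward to be concentrated on a finite $H$-invariant subset of $\partial\mathcal{FF}_N$. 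For the non-arational part $\nu_2$: Proposition~\ref{classification} attaches to $\nu_2$-almost every tree $T$ a canonical finite, $\text{Out}(F_N)$-equivariant set of proper free factor conjugacy classes (either $\text{Dyn}(T)$, or the conjugacy classes of proper free factors containing a nontrivial point stabilizer of $T$); integrating against $\nu_2$ produces a nonzero, finite, $H$-invariant Borel measure on the vertex set of $\mathcal{FF}_N$, whose atoms form a finite $H$-invariant subset of $\mathcal{FF}_N$. Either outcome contradicts nonelementarity.

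The main obstacle will be the arational branch of the bounded case, where producing a finite invariant set from a stationary measure on $\partial\mathcal{FF}_N$ demands a genuinely dynamical argument combining Gromov hyperbolicity with boundedness of the $\mathcal{FF}_N$-action. A secondary care point is the very existence of stationary measures when $H$ is not countable, as required if one wants the argument to cover the non-finitely-generated case of Theorem~\ref{HM} uniformly.
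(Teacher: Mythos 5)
Your overall architecture (Gromov's classification of the $H$-action on $\mathcal{FF}_N$, ruling out the horocyclic, lineal and focal cases by producing finite invariant sets in $\partial\mathcal{FF}_N$, and converting loxodromics into fully irreducibles via Theorem \ref{loxo-ff}) matches the paper, and your treatment of the non-arational part $\nu_2$ is exactly the paper's Proposition \ref{stationary}: the map $T\mapsto\Theta(T)$ into finite sets of free factor conjugacy classes, combined with the maximum-principle lemma (Lemma \ref{disjoint-translations}) and Proposition \ref{classification}, forces $\nu_2=0$ for any nonelementary $gr(\mu)$; note that this branch uses nonelementarity alone and has nothing to do with boundedness. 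The genuine gap is in your arational branch. There is no ``standard hyperbolicity-plus-boundedness argument'' showing that a $\mu$-stationary measure on the boundary of a hyperbolic space must be concentrated on a finite invariant set when orbits are bounded: consider an infinite cyclic group acting on $\mathbb{H}^2$ by an irrational rotation about an interior point. Its orbits are bounded, it has no finite orbit on the circle at infinity, and yet Lebesgue measure on the circle is invariant, hence stationary, and is nonatomic. So the mere existence of a stationary measure charging $\partial\mathcal{FF}_N$ (or $\mathcal{AT}$) cannot by itself contradict boundedness; an arbitrary stationary measure on $\overline{CV_N}$ is simply not tied to any orbit.

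The missing idea, which is the paper's key move, is to choose the stationary measure so that it \emph{is} tied to an orbit: take $\nu$ to be a weak-$\ast$ limit point of the convolutions $\mu^{\ast n}\ast\delta_{x_0}$ for a basepoint $x_0\in CV_N$. Such a $\nu$ is $\mu$-stationary and satisfies $\nu(\overline{Hx_0})=1$. Proposition \ref{stationary} (your $\nu_2=0$ step) then gives $\nu(\mathcal{AT})=1$, hence $\overline{Hx_0}\cap\mathcal{AT}\neq\emptyset$; Theorem \ref{boundary-ff} converts a sequence in the orbit converging to an arational tree into a sequence in $\psi(Hx_0)$ converging to a point of $\partial\mathcal{FF}_N$, so the orbit in $\mathcal{FF}_N$ is unbounded, i.e.\ $\partial_{\mathcal{FF}_N}H\neq\emptyset$. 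At that point you do not even need to run the four non-general-type cases separately: Theorem \ref{Gromov} packages exactly the statement that a nonempty limit set plus the absence of finite orbits in $\partial\mathcal{FF}_N$ yields two independent loxodromics generating a free group of rank two. (In the rotation example above, the measure obtained as a limit of $\mu^{\ast n}\ast\delta_{x_0}$ lives on a circle in the interior of the disk, not on the boundary, which is precisely how that example evades the conclusion.) Finally, your worry about uncountable $H$ is vacuous: $\text{Out}(F_N)$ is countable, so every subgroup is countable and carries a fully supported probability measure; this is why the paper's argument covers non finitely generated subgroups with no extra work.
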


\paragraph*{Stationary measures on $\partial{CV_N}$.}

Our proof of Theorem \ref{nonelementary} is based on techniques that originally arose in the study of random walks on groups. All topological spaces will be equipped with their Borel $\sigma$-algebra. Let $\mu$ be a probability measure on $\text{Out}(F_N)$. A probability measure $\nu$ on $\overline{CV_N}$ is \emph{$\mu$-stationary} if $\mu\ast\nu=\nu$, i.e. for all $\nu$-measurable subsets $E\subseteq \overline{CV_N}$, we have

\begin{displaymath}
\nu(E)=\sum_{\Phi\in\text{Out}(F_N)}\mu(\Phi)\nu(\Phi^{-1}E).
\end{displaymath}

\noindent Our first goal will be to prove the following fact.

\begin{prop}\label{stationary}
Let $\mu$ be a probability measure on $\text{Out}(F_N)$, whose support generates a nonelementary subgroup of $\text{Out}(F_N)$. Then every $\mu$-stationary probability measure on $\overline{CV_N}$ is supported on $\mathcal{AT}$. 
\end{prop}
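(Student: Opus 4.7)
I would argue by contradiction: assume $\nu(\overline{CV_N}\setminus\mathcal{AT})>0$, push $\nu$ forward to a nonzero stationary measure on the countable vertex set of $\mathcal{FF}_N$, and extract from it a finite $H$-invariant set of free factor classes, where $H:=\langle\text{supp}(\mu)\rangle$ is nonelementary by hypothesis --- contradicting nonelementarity.

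The first step is to invoke Proposition~\ref{classification} to produce an $\text{Out}(F_N)$-equivariant Borel map
\[
\mathcal{F}\colon \overline{CV_N}\setminus\mathcal{AT}\longrightarrow\bigl\{\text{nonempty finite sets of vertices of }\mathcal{FF}_N\bigr\}.
\]
On the $\text{Out}(F_N)$-invariant Borel subset where $\text{Dyn}(T)\neq\emptyset$, one sets $\mathcal{F}(T):=\text{Dyn}(T)$; this is finite nonempty by the definition of $\text{Dyn}$ and depends equivariantly on $T$. On the complement, the second alternative of Proposition~\ref{classification} produces a nontrivial point stabilizer of $T$ contained in a proper free factor, and the tree-theoretic data appearing in the proof (the collapsed simplicial tree with cyclic edge stabilizers in the nontrivial edge-stabilizer case, or the bipartite skeleton of the geometric decomposition in the remaining case) can be packaged into a canonical finite nonempty set of proper free factor classes --- for example the set of free factor supports of point stabilizers of $T$ whose support has minimal rank among all such.

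With $\mathcal{F}$ in hand, I define the pushforward measure $\tilde\nu$ on the (countable) vertex set of $\mathcal{FF}_N$ by
\[
\tilde\nu(\{F\})\;:=\;\int_{\overline{CV_N}\setminus\mathcal{AT}}\frac{\mathbf{1}_{F\in\mathcal{F}(T)}}{|\mathcal{F}(T)|}\,d\nu(T).
\]
Equivariance of $\mathcal{F}$ forces $\Phi_*\tilde\nu=\widetilde{\Phi_*\nu}$ for every $\Phi$, so $\mu$-stationarity of $\nu$ descends to $\mu$-stationarity of $\tilde\nu$. The total mass of $\tilde\nu$ is $\nu(\overline{CV_N}\setminus\mathcal{AT})>0$, and $\tilde\nu$ lives on a countable set, so it is concentrated on atoms. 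Setting $m:=\sup_F\tilde\nu(\{F\})>0$ and $M:=\{F:\tilde\nu(\{F\})=m\}$, finite total mass forces $|M|<\infty$. From $\tilde\nu(\{F\})=\sum_\Phi\mu(\Phi)\tilde\nu(\{\Phi^{-1}F\})$ and the maximality of $m$ one obtains $\Phi^{-1}F\in M$ for every $F\in M$ and every $\Phi\in\text{supp}(\mu)$; finiteness of $M$ then promotes this to $\Phi^{-1}M=M$, so $M$ is $H$-invariant. This exhibits a finite nonempty $H$-invariant subset of the vertex set of $\mathcal{FF}_N$, contradicting nonelementarity of $H$.

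The main obstacle is the construction of the equivariant map $\mathcal{F}$ on the part of $\overline{CV_N}\setminus\mathcal{AT}$ where $\text{Dyn}(T)=\emptyset$: one has to package the rather heterogeneous tree-structural data supplied by Proposition~\ref{classification} (simplicial collapses with cyclic edge stabilizers; geometric decompositions with surface components) into a canonical, finite, Borel-measurable and $\text{Out}(F_N)$-equivariant assignment of proper free factor classes. Everything downstream --- the pushforward computation and the atom-of-maximum-mass argument for stationary measures on countable sets --- is then entirely standard.
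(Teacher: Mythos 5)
Your overall strategy --- an equivariant measurable assignment of finite sets of conjugacy classes of proper free factors to non-arational trees, followed by a maximum-principle argument for stationary measures on a countable set --- is exactly the paper's strategy (the paper packages the maximum principle as Lemma \ref{disjoint-translations} and the assignment as the map $\Theta$), and your pushforward-and-maximal-atom computation is correct. But there is one concrete gap: your map $\mathcal{F}$ is supposed to be defined on all of $\overline{CV_N}\smallsetminus\mathcal{AT}$, which contains $CV_N$ itself, and neither branch of your construction produces anything there. For $T\in CV_N$ the action is free and simplicial, so $\text{Dyn}(T)=\emptyset$ and there are no nontrivial point stabilizers; moreover Proposition \ref{classification} is only stated for $T\in\partial CV_N\smallsetminus\mathcal{AT}$ and gives you nothing on $CV_N$. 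As written, $\mathcal{F}(T)$ would be empty on $CV_N$, your integrand degenerates to $0/0$ there, and the total mass of $\tilde\nu$ is only $\nu(\partial CV_N\smallsetminus\mathcal{AT})$, which could a priori vanish even when $\nu(\overline{CV_N}\smallsetminus\mathcal{AT})>0$; ruling out a stationary measure charging $CV_N$ needs its own argument. The paper closes this case by setting $\Theta(T):=\text{Loop}(T)$ for $T\in CV_N$, the nonempty, finite, equivariant collection of conjugacy classes of cyclic free factors generated by elements whose axes project to embedded simple loops in $T/F_N$; you should add the analogous clause.

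A second, smaller point: you correctly identify measurability of $\mathcal{F}$ as the remaining obstacle but leave it unresolved. The paper's resolution (Lemmas \ref{measurability-reducing} and \ref{Theta-measurable}) is to express the relevant conditions --- ellipticity of a finitely generated subgroup, fixing an arc, having dense orbits on the minimal subtree --- as countable systems of equalities and inequalities in translation length functions, using a continuous section $\overline{CV_N}\to\overline{cv_N}$. Your ``minimal rank of the free factor support'' refinement is unnecessary: the paper simply takes $\text{Per}(T)$ to be the set of all proper free factor supports of point stabilizers, which is already finite by \cite{Jia91} and nonempty (when $\text{Dyn}(T)=\emptyset$) by Proposition \ref{classification}, and dropping the refinement makes the measurability check easier.
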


We will make use of the following classical lemma, whose proof is based on a maximum principle argument (we provide a sketch for completeness). We denote by $gr(\mu)$ the subgroup of $\text{Out}(F_N)$ generated by the support of the measure $\mu$.

\begin{lemma} \label{disjoint-translations} (Ballmann \cite{Bal89}, Woess \cite[Lemma 3.4]{Woe89}, Kaimanovich-Masur \cite[Lemma 2.2.2]{KM96})
Let $\mu$ be a probability measure on a countable group $G$, and let $\nu$ be a $\mu$-stationary probability measure on a $G$-space $X$. Let $D$ be a countable $G$-set, and let $\Theta:X\to D$ be a measurable $G$-equivariant map. If $E\subseteq X$ is a $G$-invariant measurable subset of $X$ satisfying $\nu(E)>0$, then $\Theta(E)$ contains a finite $gr(\mu)$-orbit.
\end{lemma}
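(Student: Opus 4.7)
The plan is to reduce to a finite stationary measure on the countable set $D$ via the push-forward by $\Theta$, and then apply a maximum principle. Concretely, since $E$ is $G$-invariant, the restriction $\nu|_E$ (regarded as a finite measure on $X$) is itself $\mu$-stationary: for any measurable $A \subseteq X$, the $G$-invariance of $E$ gives $\Phi^{-1}(A \cap E) = \Phi^{-1}A \cap E$, so $\nu|_E(A) = \sum_{\Phi} \mu(\Phi) \nu|_E(\Phi^{-1}A)$. Setting $\rho := \Theta_{\ast}(\nu|_E)$ and using $G$-equivariance of $\Theta$ (so that $\Theta^{-1}(\Phi^{-1}\{d\}) = \Phi^{-1}\Theta^{-1}(\{d\})$), the push-forward $\rho$ is then a finite $\mu$-stationary measure on $D$ with $\rho(D) = \nu(E) > 0$, and $\rho$ is supported on $\Theta(E)$.

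Next I would exploit the fact that $D$ is countable. Since $\rho$ is purely atomic with finite total mass, the quantity $m := \sup_{d \in D} \rho(\{d\})$ is strictly positive and is attained by at least one $d_0$, and the set
\begin{displaymath}
M := \{\, d \in D : \rho(\{d\}) = m \,\}
\end{displaymath}
is finite (its cardinality is at most $\rho(D)/m$). Note that $M \subseteq \Theta(E)$ because each of its points carries positive $\rho$-mass.

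The maximum principle now gives the invariance. For any $d \in M$, stationarity reads $m = \rho(\{d\}) = \sum_{\Phi} \mu(\Phi)\, \rho(\{\Phi^{-1}d\})$. Because every summand $\rho(\{\Phi^{-1}d\})$ is at most $m$ and the coefficients $\mu(\Phi)$ sum to $1$, this equality forces $\rho(\{\Phi^{-1}d\}) = m$ for every $\Phi$ in the support of $\mu$. Hence $\Phi^{-1}(M) \subseteq M$ for every $\Phi \in \mathrm{supp}(\mu)$. Since $M$ is finite and each map $\Phi^{-1}: D \to D$ is injective, $\Phi^{-1}$ restricts to a bijection of $M$, so its inverse $\Phi$ also preserves $M$. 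Therefore $M$ is invariant under the group generated by $\mathrm{supp}(\mu)$, namely $gr(\mu)$, and any $gr(\mu)$-orbit inside $M$ is a finite orbit contained in $\Theta(E)$.

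The only genuinely delicate step is this last maximum-principle argument, specifically the passage from forward $\Phi^{-1}$-invariance of the finite set $M$ to full $gr(\mu)$-invariance; the rest is a bookkeeping exercise of verifying that $\nu|_E$ remains stationary and that $\rho$ inherits stationarity via the equivariant push-forward.
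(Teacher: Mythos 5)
Your proof is correct and follows essentially the same maximum-principle argument as the paper: push the measure forward to the countable set $D$ via $\Theta$, take the finite set of atoms of maximal mass, and use stationarity plus finiteness to upgrade invariance under $\mathrm{supp}(\mu)^{-1}$ to full $gr(\mu)$-invariance. The only (cosmetic) difference is that you restrict $\nu$ to $E$ before pushing forward, whereas the paper pushes forward all of $\nu$ and maximizes over $\Theta(E)$.
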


\begin{proof}
Let $\widetilde{\nu}$ be the probability measure on $D$ defined by setting $\widetilde{\nu}(Y):=\nu(\Theta^{-1}(Y))$ for all subsets $Y\subseteq D$. It follows from $\mu$-stationarity of $\nu$ and $G$-equivariance of $\Theta$ that $\widetilde{\nu}$ is $\mu$-stationary. Let $M\subseteq\Theta(E)$ denote the set consisting of all $x\in \Theta(E)$ such that $\widetilde{\nu}(x)$ is maximal (and in particular positive). Since $\widetilde{\nu}$ is a probability measure, the set $M$ is finite and nonempty. For all $x\in M$, we have

\begin{displaymath}
\widetilde{\nu}(x)=\sum_{g\in G}\mu(g)\widetilde{\nu}(g^{-1}x)\le \widetilde{\nu}(x)\sum_{g\in G}\mu(g)=\widetilde{\nu}(x),
\end{displaymath}

\noindent which implies that for all $g\in G$ belonging to the support of $\mu$, we have $\widetilde{\nu}(g^{-1}x)=\widetilde{\nu}(x)$. Therefore, the set $M$ is invariant under the semigroup generated by the support of $\check{\mu}$ (where $\check{\mu}(g):=\mu(g^{-1})$). As $M$ is finite, this implies that $M$ is $\text{gr}(\mu)$-invariant, so it contains a finite $\text{gr}(\mu)$-orbit. 
\end{proof}

We now define an $\text{Out}(F_N)$-equivariant map $\Theta$ from $\overline{CV_N}$ to the (countable) set $D$ of finite collections of conjugacy classes of proper free factors of $F_N$. Given a tree $T\in CV_N$, we define $\text{Loop}(T)$ to be the finite collection of conjugacy classes of elements of $F_N$ whose axes in $T$ project to an embedded simple loop in the quotient graph $T/F_N$ (these may be viewed as cyclic free factors of $F_N$). Given $T\in\overline{CV_N}$, the set of conjugacy classes of point stabilizers in $T$ is finite \cite{Jia91}. Every point stabilizer is contained in a unique minimal (possibly non proper) free factor of $F_N$, defined as the intersection of all free factors of $F_N$ containing it (the intersection of a family of free factors of $F_N$ is again a free factor). We let $\text{Per}(T)$ be the (possibly empty) finite set of conjugacy classes of proper free factors of $F_N$ that arise in this way, and we set

\begin{displaymath}
\Theta(T):=\left\{
\begin{array}{ll}
\emptyset &\text{~if~} T\in\mathcal{AT}\\
\text{Loop}(T) &\text{~if~} T\in CV_N\\
\text{Dyn}(T)\cup\text{Per}(T) &\text{~if~} T\in\partial CV_N\smallsetminus\mathcal{AT}
\end{array}\right..
\end{displaymath} 

\noindent Proposition \ref{classification} implies that $\Theta(T)=\emptyset$ if and only if $T\in\mathcal{AT}$.

\begin{lemma}\label{Theta-measurable}
The set $\mathcal{AT}$ is measurable, and $\Theta$ is measurable. 
\end{lemma}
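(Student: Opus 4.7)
The plan is to reduce everything to showing Borel-ness of countably many natural subsets of $\overline{CV_N}$, and to then express $\mathcal{AT}$ and each fiber of $\Theta$ as a Boolean combination of them. Since the target set $D$ is countable, measurability of $\Theta$ is equivalent to Borel-ness of $\Theta^{-1}(d)$ for every $d\in D$. The main tool is the Culler--Morgan embedding: each translation length $T\mapsto \|g\|_T$ is a continuous function on $\overline{CV_N}$, so any countable Boolean combination of translation-length (in)equalities yields a Borel subset.

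First I would treat $CV_N$ separately. Since $CV_N$ is open in $\overline{CV_N}$, both $CV_N$ and $\partial CV_N$ are Borel. On $CV_N$ the map $\Theta=\text{Loop}$ is constant on each open simplex of outer space, hence locally constant with countably many values, so Borel. It then remains to deal with the boundary, and in particular to identify the arational trees.

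The key step is, for each of the countably many conjugacy classes of finitely generated proper free factors $[F]$, to show that $E_{[F]}:=\{T : F \text{ is elliptic in } T\}$ and $D_{[F]}:=\{T : F \text{ acts with dense orbits on its minimal subtree in } T\}$ are Borel. The set $E_{[F]}$ is closed, cut out by the equations $\|g\|_T=0$ as $g$ runs over a finite generating set of a representative of $[F]$ (using that in a very small tree a finitely generated subgroup whose elements all have zero translation length has a common fixed point). For $D_{[F]}$ one uses that a minimal action of a finitely generated subgroup on a subtree of a very small tree has dense orbits if and only if it is not simplicial; non-simpliciality of the $F$-minimal subtree can be phrased as a Borel condition via translation-length identities on pairs of elements of $F$ (for instance, conditions ruling out that two axes overlap in a common segment of positive length). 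Granted these two ingredients, $\mathcal{AT}=\partial CV_N\setminus\bigcup_{[F]}(E_{[F]}\cup D_{[F]})$ is Borel, and each fiber $\Theta^{-1}(d)$ inside $\partial CV_N\setminus\mathcal{AT}$ is a Boolean combination of the sets $E_{[F]}$ and $D_{[F]}$ together with further inclusion/minimality conditions encoding $\text{Per}(T)$ and $\text{Dyn}(T)$, all again expressible in the same terms.

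I expect the principal difficulty to lie in the Borel description of the dense-orbit condition for a finitely generated subgroup on its minimal subtree inside a very small tree; once that is secured, the rest is routine Boolean manipulation of countable families of closed subsets of $\overline{CV_N}$.
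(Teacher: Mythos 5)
Your overall architecture (countable target, fibers of $\Theta$ as Boolean combinations of countably many translation-length conditions, $CV_N$ handled via open simplices) matches the paper, which proves the lemma by first establishing measurability of the sets $\mathcal{P}(F)$ (ellipticity), $\mathcal{E}(F)$ (arc-fixing) and $\mathcal{D}(F)$ (dense orbits on a nontrivial minimal subtree) for every finitely generated $F\subseteq F_N$. However, the step you yourself identify as the principal difficulty --- a Borel description of the dense-orbit condition --- is where your proposal goes wrong. The claimed dichotomy ``a minimal action of a finitely generated subgroup on its minimal subtree in a very small tree has dense orbits if and only if it is not simplicial'' is false: by Levitt's decomposition (used elsewhere in this very paper), such an action is in general a graph of actions with dense-orbit vertex trees and simplicial edges, and mixed examples are abundant. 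So non-simpliciality does not detect dense orbits, and the suggested translation-length formulation (``two axes overlapping in a segment of positive length'') does not characterize simpliciality either. The paper's actual criterion is different and is the real content of the lemma: the $F$-minimal subtree has dense orbits if and only if for every $n\in\mathbb{N}$ there exists a free basis $\{s_1,\dots,s_k\}$ of $F$ with $\|s_i\|_{s(T)}\le\frac{1}{n}$ and $\|s_is_j\|_{s(T)}\le\frac{1}{n}$ for all $i,j$ (where $s$ is a continuous section $\overline{CV_N}\to\overline{cv_N}$). Since $F$ has only countably many free bases, this exhibits the dense-orbit locus as a countable intersection of countable unions of closed sets, hence Borel; intersecting with the complement of $\mathcal{P}(F)$ gives $\mathcal{D}(F)$.

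A secondary gap concerns $\mathrm{Per}(T)$. You propose to encode it by ``further inclusion/minimality conditions'' on the sets $E_{[F]}$ indexed by free factors, but point stabilizers of trees in $\overline{CV_N}$ need not be free factors, so one must first locate the point stabilizers themselves before passing to the minimal free factors containing them. The paper does this by observing that a point stabilizer is either maximal among elliptic subgroups or fixes an arc, and that arc-fixing is a Borel condition: $g$ fixes an arc in $T$ if and only if $g$ is elliptic and there exist hyperbolic $h,h'$ with $\|gh\|_{s(T)}\le\|h\|_{s(T)}$, $\|gh'\|_{s(T)}\le\|h'\|_{s(T)}$ and $\|hh'\|_{s(T)}>\|h\|_{s(T)}+\|h'\|_{s(T)}$. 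You would need to add this ingredient (the sets $\mathcal{E}(F)$, and ellipticity sets for arbitrary finitely generated subgroups rather than only free factors) to make the $\mathrm{Per}$ part of $\Theta$ measurable.
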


We postpone the proof of Lemma \ref{Theta-measurable} to the next paragraph and first explain how to deduce Proposition \ref{stationary}.

\begin{proof}[Proof of Proposition \ref{stationary}]
Nonelementarity of $gr(\mu)$ implies that the only finite $gr(\mu)$-orbit in $D$ is the orbit of the empty set. Therefore, since $\Theta(T)\neq\emptyset$ as soon as $T\in\overline{CV_N}\smallsetminus\mathcal{AT}$ (Proposition \ref{classification}), the set $\Theta(\overline{CV_N}\smallsetminus\mathcal{AT})$ contains no finite $gr(\mu)$-orbit. Proposition \ref{stationary} then follows from Proposition \ref{disjoint-translations}.
\end{proof}

\paragraph*{Measurability of $\Theta$.}
Given a finitely generated subgroup $F$ of $F_N$, we denote by $\mathcal{P}(F)$ the set of trees $T\in\overline{CV_N}$ in which $F$ is elliptic, by $\mathcal{E}(F)$ the set of trees $T\in\overline{CV_N}$ in which $F$ fixes an edge, and by $\mathcal{D}(F)$ the set of trees $T\in\overline{CV_N}$ whose $F$-minimal subtree is a nontrivial $F$-tree with dense orbits.

\begin{lemma} \label{measurability-reducing}
For all finitely generated subgroups $F\subseteq F_N$, the sets $\mathcal{P}(F)$, $\mathcal{E}(F)$ and $\mathcal{D}(F)$ are measurable.
\end{lemma}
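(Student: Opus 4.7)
The plan is to use the Culler--Morgan embedding $i\colon \overline{CV_N} \hookrightarrow \mathbb{PR}^{F_N}$, which records each tree $T$ by its projective translation length function $(\|g\|_T)_{g \in F_N}$. Each coordinate map $T \mapsto \|g\|_T$ is continuous up to positive rescaling, so any subset of $\overline{CV_N}$ defined by a countable Boolean combination of conditions of the form $\|g\|_T = 0$, $\|g\|_T > 0$, or equalities between finite expressions in translation lengths is automatically Borel. I would reduce each of the three sets $\mathcal{P}(F)$, $\mathcal{E}(F)$, $\mathcal{D}(F)$ to such a combination.

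For $\mathcal{P}(F)$, I would fix a finite generating set $g_1, \ldots, g_k$ of $F$ and show that $T \in \mathcal{P}(F)$ if and only if $\|g_i\|_T = 0$ and $\|g_i g_j\|_T = 0$ for all $i,j$. One direction is immediate. For the converse, each $g_i$ has a non-empty fixed subtree in $T$; since two elliptic isometries of an $\mathbb{R}$-tree whose fix sets are disjoint produce a hyperbolic product, the assumption on products forces these subtrees to meet pairwise, and the Helly property for finite collections of subtrees in an $\mathbb{R}$-tree then yields a common fixed point for $F$. Hence $\mathcal{P}(F)$ is a finite intersection of closed subsets of $\overline{CV_N}$.

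For $\mathcal{E}(F)$, I would exploit the fact that arc stabilizers in any $T \in \overline{CV_N}$ are trivial or maximal cyclic, so $\mathcal{E}(F)$ is empty whenever $F$ has rank $\geq 2$, and equals $\overline{CV_N}$ when $F$ is trivial. In the remaining case $F = \langle g\rangle$, I would characterize ``$g$ fixes an arc in $T$'' as $\|g\|_T = 0$ together with the existence of some $h \in F_N$ for which $g$ and $hgh^{-1}$ share more than a single common fixed point; the latter is detected by an equality among the translation lengths of $g$, $hgh^{-1}$, and their product, using the elementary formulas governing compositions of elliptic isometries of $\mathbb{R}$-trees (two elliptics sharing only a point produce a hyperbolic product with translation length determined by the distance between the relevant fix sets). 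Taking the countable union over $h \in F_N$ expresses $\mathcal{E}(\langle g\rangle)$ as a countable union of closed sets.

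For $\mathcal{D}(F)$, I would write it as the intersection of $\overline{CV_N} \smallsetminus \mathcal{P}(F)$ with the set of $T$ whose $F$-minimal subtree admits dense $F$-orbits. Using Levitt's graph-of-actions decomposition of a minimal action on an $\mathbb{R}$-tree, density is equivalent to triviality of the simplicial skeleton, which can in turn be phrased as the nonexistence of an element of $F$ fixing a non-degenerate arc inside the $F$-minimal subtree; such arcs are themselves detected by translation-length combinations analogous to those used for $\mathcal{E}$. The main obstacle I anticipate is precisely this last step: unlike the algebraic conditions defining $\mathcal{P}$ and $\mathcal{E}$, density is a structural property of the action on the minimal subtree, and extracting an explicit countable list of translation-length conditions characterizing it requires invoking the full structure theory of trees in $\overline{CV_N}$ (trivial tripod stabilizers, maximal cyclic arc stabilizers) to reduce denseness to a countable disjunction of closed conditions.
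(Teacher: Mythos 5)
Your overall framework---reducing each set to a countable Boolean combination of translation-length conditions, which are Borel because $T\mapsto\|g\|_{s(T)}$ is continuous for a continuous section $s\colon\overline{CV_N}\to\overline{cv_N}$---is the same as the paper's, and your treatment of $\mathcal{P}(F)$ is correct (indeed slightly sharper: the paper just takes the countable intersection of the closed sets $\{\|w\|_{s(T)}=0\}$ over all $w\in F$, without invoking Serre's lemma). The first genuine gap is in $\mathcal{E}(F)$. The detection mechanism you describe cannot work: if $g$ and $hgh^{-1}$ are elliptic, their product is hyperbolic exactly when their fixed subtrees are \emph{disjoint} (with translation length twice the distance between them); as soon as the fixed subtrees meet, whether in a single point or in an arc, the product fixes a point and all three translation lengths vanish. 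Your parenthetical ``two elliptics sharing only a point produce a hyperbolic product'' is false---they share a fixed point, so the product is elliptic---and consequently no identity among $\|g\|$, $\|hgh^{-1}\|$ and $\|g\cdot hgh^{-1}\|$ can distinguish ``$\mathrm{Fix}(g)$ is a point'' from ``$\mathrm{Fix}(g)$ contains an arc''. The paper's fix is to use \emph{hyperbolic} witnesses: $g$ fixes an arc if and only if $g$ is elliptic and there exist hyperbolic $h,h'$ whose axes both meet $\mathrm{Fix}(g)$ but are disjoint from each other; meeting is detected by $\|gh\|_{s(T)}\le\|h\|_{s(T)}$, disjointness of the two axes by $\|hh'\|_{s(T)}>\|h\|_{s(T)}+\|h'\|_{s(T)}$, and a countable union over pairs $(h,h')$ gives measurability.

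The second gap is in $\mathcal{D}(F)$, and you partly acknowledge it yourself. The characterization you propose---dense orbits on the $F$-minimal subtree is equivalent to no element of $F$ fixing a nondegenerate arc there---is false: a free simplicial $F$-action (for instance the $F$-minimal subtree of any $T\in CV_N$) has trivial arc stabilizers and does not have dense orbits. Triviality of the simplicial part of the Levitt decomposition is not an arc-stabilizer condition, so the ``main obstacle'' you flag is not merely a technical nuisance but an unresolved step with a wrong proposed reduction. The paper's route is elementary and avoids the structure theory entirely: the $F$-minimal subtree has dense orbits if and only if for every $n\in\mathbb{N}$ there is a free basis $\{s_1,\dots,s_k\}$ of $F$ with $\|s_i\|_{s(T)}\le\frac{1}{n}$ and $\|s_is_j\|_{s(T)}\le\frac{1}{n}$ for all $i,j$; since the finitely generated free group $F$ has only countably many bases, this exhibits $\mathrm{Dense}(F)$ as a countable intersection of countable unions of closed sets, and $\mathcal{D}(F)=\mathrm{Dense}(F)\cap{}^c\mathcal{P}(F)$ is then measurable.
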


\begin{proof}
Let $F$ be a finitely generated subgroup of $F_N$. Let $s:\overline{CV_N}\to\overline{cv_N}$ be a continuous section. We have

\begin{displaymath}
\mathcal{P}(F)=\bigcap_{w\in F}\{T\in\overline{CV_N}|||w||_{s(T)}=0\},
\end{displaymath}

\noindent so $\mathcal{P}(F)$ is measurable. An element $g\in F_N$ fixes an arc in a tree $T\in\overline{CV_N}$ if and only if it is elliptic, and there exist two hyperbolic isometries $h$ and $h'$ of $T$ whose translation axes both meet the fixed point set of $g$ but are disjoint from each other. These conditions can be expressed in terms of translation length functions: they amount to requiring that $||gh||_{s(T)}\le||h||_{s(T)}$ and $||gh'||_{s(T)}\le||h'||_{s(T)}$ and $||hh'||_{s(T)}>||h||_{s(T)}+||h'||_{s(T)}$, see \cite[1.5]{CM87}. So $\mathcal{E}(F)$ is a measurable set, too.

The $F$-minimal subtree of a tree $T\in\overline{CV_N}$ has dense orbits if and only if for all $n\in\mathbb{N}$, there exists a free basis $\{s_1,\dots,s_k\}$ of $F$ so that for all $i,j\in\{1,\dots,k\}$, we have $||s_i||_{s(T)}\le\frac{1}{n}$ and $||s_is_j||_{s(T)}\le\frac{1}{n}$. This implies that the set $\text{Dense}(F)$ consisting of those trees in $\overline{CV_N}$ whose $F$-minimal subtree has dense orbits is measurable. Therefore $\mathcal{D}(F)={\text{Dense}}(F)\cap^c\mathcal{P}(F)$ is also measurable.  
\end{proof}

\begin{proof}[Proof of Lemma \ref{Theta-measurable}]
Measurability of $\mathcal{AT}$ follows from Lemma \ref{measurability-reducing}. For all $T\in\overline{CV_N}$, the set $\text{Dyn}(T)$ consists of conjugacy classes of minimal free factors of $F_N$ that act with dense orbits on their minimal subtree in $T$ but are not elliptic, so measurability of the map $T\mapsto \text{Dyn}(T)$ follows from measurability of $\mathcal{D}(F)$ for all finitely generated subgroups $F$ of $F_N$. Point stabilizers in $T$ are either maximal among elliptic subgroups, or fix an arc in $T$. Therefore, since $\mathcal{P}(F)$ and $\mathcal{E}(F)$ are measurable for all finitely generated subgroups $F$ of $F_N$, the set of conjugacy classes of point stabilizers in a tree $T\in\overline{CV_N}$ depends measurably on $T$. Measurability of $T\mapsto\text{Per}(T)$ follows from this observation. As open simplices in $CV_N$ are also measurable, measurability of $\Theta$ follows.  
\end{proof}

\paragraph*{End of the proof of Theorem \ref{nonelementary}.}

\begin{prop}\label{nonempty-limit-set}
Let $H\subseteq\text{Out}(F_N)$ be a nonelementary subgroup of $\text{Out}(F_N)$. Then the $H$-orbit of any point $x_0\in CV_N$ has a limit point in $\mathcal{AT}$.
\end{prop}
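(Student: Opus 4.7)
The plan is to exploit Proposition \ref{stationary} by constructing, for any nonelementary subgroup $H \subseteq \text{Out}(F_N)$, a $\mu$-stationary probability measure $\nu$ on $\overline{CV_N}$ whose support is simultaneously contained in the closure of $H \cdot x_0$ and in $\mathcal{AT}$.

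First, since $H$ is countable, choose any probability measure $\mu$ on $\text{Out}(F_N)$ whose support is exactly $H$ (for example, enumerate $H = \{h_1, h_2, \dots\}$ and set $\mu(h_i) = 2^{-i}$). Then $gr(\mu) = H$, so Proposition \ref{stationary} will apply to any $\mu$-stationary probability measure on $\overline{CV_N}$.

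Next, I would produce a stationary measure by the standard Cesàro averaging argument. Starting from $\delta_{x_0}$, form the probability measures
$$\nu_n := \frac{1}{n} \sum_{k=0}^{n-1} \mu^{*k} * \delta_{x_0}$$
on $\overline{CV_N}$. Since $\overline{CV_N}$ is compact (by Culler--Morgan), the space of probability measures on $\overline{CV_N}$ is weak-$*$ compact, so a subsequence of $(\nu_n)$ converges weak-$*$ to some probability measure $\nu$. The usual telescoping computation $\mu*\nu_n - \nu_n = \frac{1}{n}(\mu^{*n}*\delta_{x_0} - \delta_{x_0})$ tends to $0$ in total variation, and taking weak-$*$ limits yields $\mu * \nu = \nu$; that is, $\nu$ is $\mu$-stationary.

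Now the two key containments. On the one hand, each measure $\mu^{*k}$ is supported on $H$, so each $\nu_n$ is supported on the countable set $H \cdot x_0$; hence $\text{supp}(\nu) \subseteq \overline{H \cdot x_0}$. On the other hand, $H = gr(\mu)$ is nonelementary by hypothesis, so Proposition \ref{stationary} gives $\text{supp}(\nu) \subseteq \mathcal{AT}$. Since $\nu$ is a probability measure on a nonempty set, $\text{supp}(\nu)$ is nonempty, and because $\mathcal{AT} \subseteq \partial CV_N$ is disjoint from $CV_N \ni x_0$, every point of $\text{supp}(\nu)$ is a genuine accumulation point of the orbit $H \cdot x_0$ lying in $\mathcal{AT}$, which is the desired conclusion.

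The only genuinely nontrivial ingredient has already been established (Proposition \ref{stationary}); the remaining obstacle is purely formal, namely checking weak-$*$ stationarity of the Cesàro limit and verifying that the support of each $\nu_n$ really is contained in $H \cdot x_0$, both of which are standard.
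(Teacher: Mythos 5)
Your proposal is correct and follows essentially the same route as the paper: both construct a $\mu$-stationary measure on the compact space $\overline{CV_N}$ as a weak-$\ast$ limit starting from $\delta_{x_0}$ (the paper takes a limit point of $(\mu^{\ast n}\ast\delta_{x_0})$ directly, while your Ces\`aro averaging is the more careful standard variant) and then invoke Proposition \ref{stationary} together with $\nu(\overline{Hx_0})=1$. One small imprecision: since $\mathcal{AT}$ is only shown to be measurable, not closed, you should not write $\operatorname{supp}(\nu)\subseteq\mathcal{AT}$, but rather conclude from $\nu(\mathcal{AT})=\nu(\overline{Hx_0})=1$ that $\nu(\mathcal{AT}\cap\overline{Hx_0})=1$, so this intersection is nonempty, which (as $\mathcal{AT}\subseteq\partial CV_N$ is disjoint from the orbit) yields the desired limit point.
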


\begin{proof}
Let $\mu$ be a probability measure on $\text{Out}(F_N)$ whose support generates $H$. Since $\overline{CV_N}$ is compact, the sequence of convolutions $(\mu^{\ast n}\ast\delta_{x_0})_{n\in\mathbb{N}}$ has a weak-$\ast$ limit point $\nu$, which is a $\mu$-stationary measure on $\overline{CV_N}$.  We have $\nu(\overline{Hx_0})=1$, where $Hx_0$ denotes the $H$-orbit of $x_0$ in $CV_N$, and Proposition \ref{stationary} implies that $\nu(\mathcal{AT})=1$. This shows that $\overline{Hx_0}\cap\mathcal{AT}$ is nonempty.
\end{proof}

As a consequence of Theorem \ref{boundary-ff} and Proposition \ref{nonempty-limit-set}, we get the following fact.

\begin{cor}\label{limit-set-fz}
Let $H\subseteq\text{Out}(F_N)$ be a nonelementary subgroup of $\text{Out}(F_N)$. Then the $H$-orbit of any point in $\mathcal{FF}_N$ has a limit point in $\partial \mathcal{FF}_N$.
\qed
\end{cor}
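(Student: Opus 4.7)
The plan is to transport Proposition \ref{nonempty-limit-set} from outer space to the free factor complex via the map $\psi$ and its boundary extension $\partial\psi$ given by Theorem \ref{boundary-ff}. Since the $H$-action on $\mathcal{FF}_N$ is isometric and $\psi$ is coarsely $\mathrm{Out}(F_N)$-equivariant, it suffices to handle a vertex of $\mathcal{FF}_N$: any point of $\mathcal{FF}_N$ is at bounded distance from some vertex, and in a hyperbolic space, sequences of pairs at uniformly bounded distance share their boundary limits. So pick any vertex $x \in \mathcal{FF}_N$, i.e.\ a conjugacy class of a proper free factor.

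Next I would lift $x$ to outer space: every vertex of $\mathcal{FF}_N$ is $\psi(T_0)$ for some $T_0 \in CV_N$ (take any tree whose quotient graph contains an embedded loop representing a primitive element of the chosen free factor). Apply Proposition \ref{nonempty-limit-set} to the point $T_0$: there is a sequence $(\Phi_n)_{n \in \mathbb{N}}$ of elements of $H$ such that $\Phi_n \cdot T_0$ converges in $\overline{CV_N}$ to some arational tree $T \in \mathcal{AT}$.

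Now apply Theorem \ref{boundary-ff}: since $\Phi_n \cdot T_0 \to T$ in $\overline{CV_N}$ with $T \in \mathcal{AT}$, we get $\psi(\Phi_n \cdot T_0) \to \partial\psi(T) \in \partial\mathcal{FF}_N$ in $\mathcal{FF}_N \cup \partial\mathcal{FF}_N$. By the (coarse) equivariance of $\psi$, the points $\psi(\Phi_n \cdot T_0)$ and $\Phi_n \cdot \psi(T_0) = \Phi_n \cdot x$ lie at uniformly bounded distance in $\mathcal{FF}_N$. In a Gromov hyperbolic space, two sequences remaining at uniformly bounded distance have the same limit points in the Gromov boundary, so $\Phi_n \cdot x$ also converges to $\partial\psi(T) \in \partial\mathcal{FF}_N$, exhibiting the desired limit point of the $H$-orbit of $x$.

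The only subtlety is that $\psi$ is only coarsely defined and coarsely equivariant, but this is harmless: bounded perturbations do not affect convergence to boundary points of a Gromov hyperbolic space, so the argument goes through without further effort. There is no real obstacle here; the corollary is essentially a formal consequence of the two earlier inputs once one observes that the boundary convergence criterion of Theorem \ref{boundary-ff} is exactly matched by the arationality of the limit tree provided by Proposition \ref{nonempty-limit-set}.
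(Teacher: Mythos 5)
Your proof is correct and is precisely the argument the paper intends: the corollary is stated there with no written proof, as an immediate consequence of Proposition \ref{nonempty-limit-set} and the convergence criterion of Theorem \ref{boundary-ff}, and your writeup (lift a vertex to $CV_N$, extract a limit in $\mathcal{AT}$, push forward by $\psi$, and absorb the coarseness of $\psi$ using hyperbolicity of $\mathcal{FF}_N$) is exactly the derivation being left to the reader.
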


\begin{proof}[Proof of Theorem \ref{nonelementary}]
Let $H$ be a nonelementary subgroup of $\text{Out}(F_N)$. Corollary \ref{limit-set-fz} shows that the $H$-orbit of any point in $\mathcal{FF}_N$ has a limit point in $\partial \mathcal{FF}_N$. As $H$ has no finite orbit in $\partial\mathcal{FF}_N$, Theorem \ref{Gromov} shows that $H$ contains two loxodromic isometries which generate a free group of rank two. Theorem \ref{nonelementary} then follows from the fact that elements of $\text{Out}(F_N)$ that act loxodromically on $\mathcal{FF}_N$ are fully irreducible (Theorem \ref{loxo-ff}).  
\end{proof}

\section{Proof of Theorem \ref{HM}}

Let $H$ be a subgroup of $\text{Out}(F_N)$. If $H$ is nonelementary, then the claim follows from Theorem \ref{nonelementary}. Otherwise, either $H$ fixes a finite subset of conjugacy classes of proper free factors (in which case a finite index subgroup of $H$ fixes the conjugacy class of a proper free factor of $F_N$), or $H$ virtually fixes a point in $\partial \mathcal{FF}_N$. The set of trees in $\partial CV_N$ that project to this point is a finite-dimensional simplex in $\partial CV_N$ by \cite[Corollary 5.4]{Gui00}, and $H$ fixes the finite subset of extremal points of this simplex. Up to passing to a finite index subgroup again, we can assume that $H$ fixes an arational tree $T\in\partial CV_N$. By Reynolds' characterization of arational trees \cite[Theorem 1.1]{Rey12}, either $T$ is free, or else $T$ is dual to an arational measured lamination on a surface $S$ with one boundary component. In the first case, it follows from \cite[Theorem 1.1]{KL11} that $H$ is virtually cyclic, virtually generated by an automorphism $\Phi\in\text{Out}(F_N)$, and in this case $\Phi$ is fully irreducible, otherwise $H$ would virtually fix the conjugacy class of a proper free factor of $F_N$. In the second case, all automorphisms in $H$ can be realized as diffeomorphisms of $S$ \cite[Theorem 4.1]{BH92}. So $H$ is a subgroup of the mapping class group of $S$, and the claim follows from the analogous classical statement that stabilizers of arational measured foliations are virtually cyclic \cite[Proposition 2.2]{McP89}.
\qed

\bibliographystyle{amsplain}
\bibliography{/Users/Camille/Documents/Bibliographie}

\end{document}